\newtheorem{thm}{Theorem}[section] 
\newtheorem{lem}[thm]{Lemma}
\newcommand{\on}[2]{\genfrac{}{}{0pt}{}{#1}{#2}}
\theoremstyle{remark}
\def\qq{\mathbb{Q}}
\def\pp{\mathbb{P}}
\def\rr{\mathbb{R}}
\def\zz{\mathbb{Z}}
\numberwithin{equation}{section}
\begin{document}

\title{One half log discriminant and division polynomials}

\author{Robin de Jong}

\subjclass[2010]{Primary 11G20; secondary 11B83}

\address{Mathematical Institute, University of Leiden, PO Box 9512,
2300 RA Leiden, The Netherlands}

\email{rdejong@math.leidenuniv.nl}

\begin{abstract} L. Szpiro and T. Tucker recently proved that under mild conditions, the valuation of the minimal discriminant of an elliptic curve with semistable reduction over a discrete valuation ring can be expressed in terms of intersections between $n$-torsion and $2$-torsion, where $n$ tends to infinity. The argument of Szpiro and Tucker is geometric in nature. We give a proof based on the arithmetic of division polynomials, and generalize the result to the case of hyperelliptic curves.
\end{abstract}

\keywords{Discriminant, division polynomial, elliptic curve, hyperelliptic curve.}

\maketitle
\thispagestyle{empty}

\section{Introduction}

Let $K$ be a field of characteristic $p \neq 2$ endowed with a non-trivial discrete valuation, and let $O$ be the ring of integers of $K$. Let $E$ be an elliptic curve over $K$ given by a minimal equation $y^2 = f(x)$ with $f(x) \in O[x]$ a monic cubic separable polynomial. Let $\pp^1_O$ be the projective line over $O$. Let $D$ be the Zariski closure in $\pp^1_O$ of the scheme of zeroes of $f$ on $\pp^1_K$, and for each positive integer $n$ with $p \nmid n$ let $H_n$ be the Zariski closure in $\pp^1_O$ of the pushforward under $x \colon E \to \pp^1_K$ of the $n$-torsion minus the $2$-torsion in $E$.

In \cite{st1} L. Szpiro and T. Tucker proved the following theorem.
\begin{thm} \label{stthm} Assume that $E$ has semistable reduction over $K$. Let $\Delta$ be the discriminant of $f$. Then the formula:
\[ \lim_{\on{n \to \infty}{p \nmid n}} \frac{1}{n^2} (D,H_n)_\nu = \frac{1}{2} \nu(\Delta) \]
holds, where $\nu \colon K^* \to \zz$ is the normalized valuation of $K$ and where $(,)_\nu$ is the geometric intersection pairing on the arithmetic surface $\pp^1_O$.
\end{thm}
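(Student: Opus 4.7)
The plan is to translate $(D, H_n)_\nu$ into the $\nu$-adic valuation of a resultant of $f$ with a division polynomial of $E$, and then to extract the asymptotic from explicit algebraic identities for division polynomials at $2$-torsion.

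Consider first $n$ odd with $p \nmid n$, so that the $n$-th division polynomial $\psi_n(x) \in O[x]$ has unit leading coefficient $n$ and degree $(n^2-1)/2$, its zeros being the $x$-coordinates of the nontrivial $n$-torsion of $E$. A direct pushforward computation shows $H_n = 2\,\overline{V(\psi_n)} - D$ as horizontal divisors on $\pp^1_O$. Since $f \in O[x]$ is monic, the identity $\operatorname{div}(f) = D - 3[\infty]$ holds on $\pp^1_O$, so $D$ is linearly equivalent to $3[\infty]$ and $(D,D)_\nu = 0$. Because $\psi_n$ has no vertical components (its leading coefficient being a unit), this yields
\[ (D, H_n)_\nu \;=\; 2\,(D, \overline{V(\psi_n)})_\nu \;=\; 2\,\nu\bigl(\operatorname{Res}(f, \psi_n)\bigr) \;=\; 2\sum_{i=1}^3 \nu\bigl(\psi_n(e_i)\bigr), \]
where the $e_i$ are the roots of $f$, i.e., the $x$-coordinates of the nontrivial $2$-torsion points $T_i$ of $E$.

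The task is therefore to compute $\nu(\psi_m(T_i))$ asymptotically, and for this the plan relies on two algebraic identities. A direct polynomial reduction modulo $f$ yields the congruence
\[ \psi_3(x) \;\equiv\; -\,f'(x)^2 \pmod{f(x)}, \]
so using the classical $\prod_i f'(e_i) = -\Delta$ one gets $\operatorname{Res}(f,\psi_3) = -\Delta^2$ and in particular $\nu(\operatorname{Res}(f,\psi_3)) = 2\nu(\Delta)$. Next, evaluating the division-polynomial addition formula $\psi_{a+b}\psi_{a-b} = \psi_{a-1}\psi_{a+1}\psi_b^2 - \psi_{b-1}\psi_{b+1}\psi_a^2$ at $T_i$ with $b = 2$, and using that $\psi_j(T_i) = 0$ for all even $j$, one obtains the clean recursion
\[ \psi_{a+2}(T_i)\,\psi_{a-2}(T_i) \;=\; -\,\psi_3(T_i)\,\psi_a(T_i)^2 \qquad (a\text{ odd}). \]
Starting from $\psi_1(T_i) = 1$ and iterating, an easy induction gives the closed form $\psi_m(T_i) = \epsilon_m\,\psi_3(T_i)^{(m^2-1)/8}$ for every odd $m$, where $\epsilon_m \in \{\pm 1\}$ and the exponent $(m^2-1)/8$ is the triangular number $\binom{(m+1)/2}{2}$.

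Combining these two identities, for odd $m$ coprime to $p$,
\[ \nu\bigl(\operatorname{Res}(f,\psi_m)\bigr) \;=\; \tfrac{m^2-1}{8}\,\nu\bigl(\operatorname{Res}(f,\psi_3)\bigr) \;=\; \tfrac{m^2-1}{4}\,\nu(\Delta), \]
so $(D, H_m)_\nu = \frac{m^2-1}{2}\nu(\Delta)$ and $\frac{1}{m^2}(D,H_m)_\nu \to \frac{1}{2}\nu(\Delta)$ as $m \to \infty$ along odd integers coprime to $p$. For $n$ even the same reduction applies with $\phi_n := \psi_n/(2y) \in O[x]$ (of degree $(n^2-4)/2$, with leading coefficient $n/2$, a unit since $p \ne 2$ and $p \nmid n$) in place of $\psi_n$, giving $(D, H_n)_\nu = 2\,\nu(\operatorname{Res}(f, \phi_n))$. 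The analogous asymptotic for $\nu(\phi_n(e_i))$ can be extracted by differentiating the relation $\psi_n^2 = 4f\phi_n^2$ at $x = e_i$ to obtain $\phi_n(e_i)^2 = (\psi_n^2)'(e_i)/(4f'(e_i))$, and then applying the doubling formula $\psi_{2m} = \psi_m(\psi_{m+2}\psi_{m-1}^2 - \psi_{m-2}\psi_{m+1}^2)/(2y)$ together with the odd-case result. I expect this even case to be the main technical hurdle, since the clean power law available for $\psi_m(T_i)$ at odd $m$ is replaced by a more intricate expression; the semistability hypothesis, which is central in the geometric approach of Szpiro and Tucker, does not appear to enter the algebraic reduction presented here.
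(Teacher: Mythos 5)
Your reduction of $(D,H_n)_\nu$ to valuations of resultants is the same first step the paper takes implicitly (it restates the conclusion as $\frac{1}{n^2}\log|\mathrm{Res}(f,\psi_n^2)|_\nu \to \frac12\log|\Delta|_\nu$), but from there your treatment of odd $n$ is a correct and genuinely different route. The paper works with Cantor's generalized division polynomials: it proves $P_j(\alpha)=(-1)^g c_{j-1}F'(\alpha)^j$ (Catalan numbers) by repeatedly differentiating the series $S(z)$ with $S(z)^2=F(x-z)$, and then evaluates a Hankel determinant of Catalan numbers via Desainte-Catherine--Viennot, obtaining the exact identity $\psi_n(\alpha)=c(n)f'(\alpha)^{d(n)}$ uniformly for all $n$, in arbitrary genus, and with no semistability or discreteness hypothesis. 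Your recursion $\psi_{a+2}(T_i)\psi_{a-2}(T_i)=-\psi_3(T_i)\psi_a(T_i)^2$ combined with $\psi_3\equiv -f'^2 \pmod{f}$ recovers the same identity $\psi_m(e_i)=\pm f'(e_i)^{(m^2-1)/4}$ for odd $m$ by elementary elliptic-curve manipulations, and your observation that semistability plays no role agrees with the paper. Two small repairs: for odd $n$ the $2$-torsion is not contained in the $n$-torsion, so $H_n=2\overline{V(\psi_n)}$ with no $-D$ term --- which is fortunate, because the geometric self-intersection $(D,D)_\nu$ is not defined and cannot be computed by moving $D$ in its linear equivalence class (the geometric pairing of a horizontal divisor against a principal divisor is not zero in general); and $\nu(\mathrm{Res}(f,\psi_m))=\sum_i\nu(\psi_m(e_i))$ requires extending $\nu$ to a splitting field of $f$, which is harmless.

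The genuine gap is the even case, which you yourself flag as ``the main technical hurdle'' and do not carry out; it is needed because the limit in the theorem runs over all $n$ with $p\nmid n$, not just the odd ones. Your identity $\phi_n(e_i)^2=(\psi_n^2)'(e_i)/(4f'(e_i))$ is correct, but the proposed appeal to the doubling formula is circular as stated: for $n=2m$ with $m$ odd that formula involves $\psi_{m\pm1}$, which are even-index division polynomials whose behaviour at $T_i$ is exactly the unknown, and for $m$ even the factors $\psi_m,\psi_{m\pm2}$ cause the same problem. A route that does work is to take the addition formula with $b=2$ and $a$ even and divide through by $\psi_2^2=4f$, giving the polynomial identity $\phi_{a+2}\phi_{a-2}=\psi_{a+1}\psi_{a-1}-\psi_3\phi_a^2$; evaluated at $e_i$ this is a recursion along even indices whose inhomogeneous term is supplied by your odd-index closed form, and together with $\phi_2=1$ and a direct computation of $\phi_4(e_i)$ it yields $\phi_n(e_i)=\pm\frac{n}{2}f'(e_i)^{(n^2-4)/4}$, hence $(D,H_n)_\nu=\frac{n^2-4}{2}\nu(\Delta)$ (the constants $\pm n/2$ contribute nothing since $p\nmid n$ and $p\neq 2$) and the same limit. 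Until some such argument is supplied, your proof establishes the limit only along the odd subsequence.
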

The underlying reduced scheme of $H_n$ can be conveniently described by a \emph{division polynomial} $\psi_n \in O[x]$ (cf. \cite{silarit}, Exercise 3.7). The polynomial $\psi_n$ has degree $(n^2-1)/2$ if $n$ is odd, degree $(n^2-4)/2$ if $n$ is even, and has leading coefficient $n$. An alternative way of writing the conclusion of the theorem is therefore that:
\[ \frac{1}{n^2}  \log| \mathrm{Res}(f,\psi_n^2)|_\nu \longrightarrow \frac{1}{2} \log |\Delta|_\nu \]
as $n \to \infty$ with $p \nmid n$, where $|\cdot|_\nu \colon K^* \to \rr^+$ is any absolute value determined by $\nu$. The proof in \cite{st1} of Theorem \ref{stthm} uses the geometry of the special fiber of the minimal regular model of $E$ over $O$.

Our purpose in this note is to show that Theorem \ref{stthm} can alternatively be derived from a study of the arithmetic of the division polynomials $\psi_n$. In particular we will remove the assumption that $E$ should have semistable reduction over $K$, as well as the assumption that $K$ should be a discretely valued field. In fact, using the generalized division polynomials introduced by D. Cantor \cite{ca}, to be explained below, we can even generalize the result to the context of hyperelliptic curves.

Let $g$ be a positive integer, and let $k$ be a field of characteristic $p$ where $p=0$ or $p \geq 2g+1$. Let $|\cdot|$ be an absolute value on $k$. Let $(X,o)$ be an elliptic curve or a pointed hyperelliptic curve of genus $g \geq 2$ over $K$, given by an equation $y^2=f(x)$ with $f(x) \in k[x]$ monic, separable and of degree $2g+1$, putting $o$ at infinity.
\begin{thm} \label{main}  Let $\psi_n \in k[x]$ be the $n$-th Cantor division polynomial of $(X,o)$ and let $\alpha \in k$ be a root of $f$. Then:
\[ \frac{1}{n^2} \log |\psi_n^2(\alpha)| \longrightarrow \frac{1}{2} \log |f'(\alpha)| \]
as $n \to \infty$. Here, only integers $n$ are taken with $p \nmid (n-g+1)\cdots(n+g-1)$. In particular, under the same assumptions we have:
\[ \frac{1}{n^2} \log |\mathrm{Res}(f,\psi_n^2)| \longrightarrow
\frac{1}{2} \log |\Delta| \]
as $n \to \infty$ where $\Delta $ is the discriminant of $f$.
\end{thm}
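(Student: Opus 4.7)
My plan is to derive an exact algebraic identity for $\psi_n^2(\alpha)$, from which the stated convergence will follow immediately.

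First, the second convergence (for the resultant) is an immediate consequence of the first. Since $f$ is monic of degree $2g+1$ with roots $\alpha_1,\ldots,\alpha_{2g+1}$ in $\bar k$, one has $\mathrm{Res}(f,\psi_n^2) = \prod_i \psi_n^2(\alpha_i)$ and $\mathrm{Res}(f,f') = \prod_i f'(\alpha_i) = \Delta$; summing the first display over the $2g+1$ roots of $f$ yields the resultant statement. Thus it suffices to prove the first display at a single root $\alpha$ of $f$.

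The central step is to prove an identity of the form
\[ \psi_n^2(\alpha) = c_n \cdot f'(\alpha)^{e_n}, \]
where $c_n \in \mathbb{Q}^\times$ depends only on $n$ and $g$, and $e_n \in \mathbb{Z}$ satisfies $e_n/n^2 \to \tfrac12$ as $n\to\infty$. In the elliptic case ($g=1$) this can be established using the classical recursion
\[ \psi_{2m+1} = \psi_{m+2}\psi_m^3 - \psi_{m-1}\psi_{m+1}^3, \qquad 2y\,\psi_{2m} = \psi_m(\psi_{m+2}\psi_{m-1}^2 - \psi_{m-2}\psi_{m+1}^2). \]
Reducing modulo $y^2-f(x)$ and setting $x=\alpha$ (so that $y=0$ and $f(\alpha)=0$) yields, by induction on $n$, the explicit formulas $\psi_n^2(\alpha) = \pm f'(\alpha)^{(n^2-1)/2}$ for odd $n$ and $\psi_n^2(\alpha) = \pm n^2 f'(\alpha)^{(n^2-4)/2}$ for even $n$ (with base cases like $\psi_3(\alpha)=-f'(\alpha)^2$ checked directly). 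For the hyperelliptic case, I would carry out the analogous induction using Cantor's generalized recursion for $\psi_n$, specialized at the Weierstrass point $(\alpha,0)$, whose class $[(\alpha,0)-o]$ is a $2$-torsion element of $\mathrm{Jac}(X)$. This specialization should produce an identity of the same shape, with the integer factors appearing in $c_n$ naturally coming from the window $\{n-g+1,\ldots,n+g-1\}$.

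Granting such an identity, taking $\log|\cdot|$ of both sides and dividing by $n^2$ gives
\[ \tfrac{1}{n^2}\log|\psi_n^2(\alpha)| = \tfrac{e_n}{n^2}\log|f'(\alpha)| + \tfrac{1}{n^2}\log|c_n|. \]
The first term tends to $\tfrac{1}{2}\log|f'(\alpha)|$ by the asymptotic on $e_n$. The second tends to $0$: at an archimedean absolute value $|c_n|$ grows only polynomially in $n$, while at a non-archimedean one the hypothesis $p\nmid (n-g+1)\cdots(n+g-1)$ guarantees $|c_n|=1$. This yields the desired convergence.

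\textbf{Main obstacle.} Establishing the key identity in the hyperelliptic case. Cantor's recursion is considerably more involved than Silverman's, and the induction must simultaneously track the exponent $e_n$ and the rational constant $c_n$ through several coupled sequences; in particular, proving that the error $n^2/2 - e_n$ is $O_g(1)$ and identifying precisely which integers appear in $c_n$---the ones forbidden to be divisible by $p$ in the hypothesis---is the delicate part, since one needs not only a bound but the exact combinatorial shape of $c_n$ to match the stated avoidance condition.
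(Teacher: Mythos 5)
Your overall architecture is sound and matches the paper's endgame: reduce the resultant statement to the single-root statement via $\mathrm{Res}(f,\psi_n^2)=\prod_i\psi_n^2(\alpha_i)$ and $\Delta=\prod_i f'(\alpha_i)$, establish an exact identity $\psi_n^2(\alpha)=c_n\cdot f'(\alpha)^{e_n}$ with $e_n/n^2\to\tfrac12$, and then take logarithms, using polynomial growth of $c_n$ at archimedean places and $|c_n|=1$ at non-archimedean places under the divisibility hypothesis. The paper's final step is exactly this. But the central identity is where your argument has a genuine gap, and you have correctly identified it yourself: for $g\geq 2$ you only assert that ``the analogous induction using Cantor's generalized recursion'' should work, without carrying it out, and without any argument that the exponent $e_n$ has the right asymptotics or that the prime factors of $c_n$ lie in the window $\{n-g+1,\ldots,n+g-1\}$. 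Since the theorem is stated for all $g\geq 1$, this is not a detail but the entire content of the proof. Even in the elliptic case, your induction through the coupled recursions is only sketched, and the constants you state for even $n$ do not obviously match Cantor's normalization of $\psi_n$ (which is what the theorem is about); this would need to be checked.

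The paper avoids recursions entirely and works directly from Cantor's Hankel-determinant definition of $\psi_n$ in terms of the coefficients $P_j(x)$ of the expansion $S(z)=\sum_j P_j(x)(2y)^{1-2j}z^j$ with $S(z)^2=F(x-z)$. A short induction on derivatives of $S$ shows $P_j(\alpha)=(-1)^g c_{j-1}F'(\alpha)^j$ where $c_{j-1}$ is a Catalan number. Substituting into the determinant and pulling the powers of $F'(\alpha)$ out of rows and columns gives $\psi_n(\alpha)=c(n)F'(\alpha)^{d(n)}$ with $d(n)=(n^2-g^2)/4$ or $(n^2-(g+1)^2)/4$, where $c(n)$ is a Hankel determinant of Catalan numbers. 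The closed-form product formula of Desainte-Catherine and Viennot for such determinants then delivers, in one stroke, both the polynomial growth of $c(n)$ and the fact that its prime divisors are confined to the interval $(n-g+1,\ldots,n+g-1)$ --- precisely the ``exact combinatorial shape'' you flag as the delicate missing piece. If you want to complete your proof, this determinant evaluation is the missing idea; the recursion route, even if it can be pushed through for general $g$, would be far more laborious.
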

The motivation in \cite{st1} to study limits of intersection numbers as in Theorem \ref{stthm} is that, when working over a number field $K$, these limits are natural local non-archimedean heights associated to the scheme $D$. As $D$ consists only of torsion points, the global height vanishes; this is used in \cite{st1} to show that the total archimedean contribution to the height is equal to $\frac{1}{2}\log|N_{K/\qq}(\Delta)|$ where $N_{K/\qq}(\Delta)$ is the norm of $\Delta$ in $\zz$. Theorem \ref{main} provides local heights at each of the archimedean places too, and allows one to verify a posteriori that the global height is zero, by the product formula.

We note that the condition that $p \nmid (n-g+1)\cdots(n+g-1)$ is rather natural from the theory of Weierstrass points in positive characteristic (see \cite{ne} for example, esp. Remark 2.8). The condition generalizes the condition $p \nmid n$ from the case of elliptic curves.

\section{Cantor's division polynomials}

Our main result is a statement about the asymptotic behavior of certain special values of division polynomials associated to hyperelliptic curves. We briefly recall from \cite{ca} the construction of these division polynomials and their main properties. Let again $g\geq 1$ be an integer. 

Let $R$ be the commutative ring $\zz[a_1,\ldots,a_{2g+1}][1/2]$, 
where $a_1,\ldots,a_{2g+1}$ are indeterminates. Let $F(x)$ be the polynomial
$x^{2g+1}+a_{1}x^{2g} + \cdots+a_{2g} x + a_{2g+1}$ in $R[x]$, and 
let $\Delta \in R$ be the discriminant of $F$. Let $y$ be a variable satisfying $y^2 =F(x)$, and write $E_1(z)$ for the polynomial $E_1(z)=(F(x-z)-y^2)/z$ in $R[x,z]$. Put: 
\[ S(z)=(-1)^{g+1} y \sqrt{1+zE_1(z)/y^2} \, , \] 
where 
$\sqrt{1+zE_1(z)/y^2}$ is the power series in $R[x,y^{-1}][[z]]$ 
obtained by binomial expansion on $1+zE_1(z)/y^2$. 

It is straightforward to verify that: 
\[ S(z)^2 = F(x-z) \, ,  \quad \textrm{and} \qquad 
 S(z) = \sum_{j=0}^\infty P_j(x)(2y)^{1-2j} z^j \]
for some $P_j(x) \in R[x]$ of degree $2jg$ and with leading coefficient in $\zz$. 

Let $n \geq g$ be an integer. Then Cantor's division polynomial $\psi_n$ (in genus $g$) is by definition the element of $R[x]$ given by:
\begin{equation} \label{defpsi} 
\psi_n = \left\{ \begin{array}{cl}
 \left| \begin{array}{cccc} 
 P_{g+1} &  P_{g+2} & \cdots & P_{(n+g)/2} \\ 
 P_{g+2} & \adots & \adots & \vdots \\
 \vdots & \adots & \adots & P_{n-2} \\
 P_{(n+g)/2} & \cdots & P_{n-2} & P_{n-1} \end{array} \right|  
& n \equiv g \bmod 2 \, , \\
 \left| \begin{array}{cccc} 
 P_{g+2} &  P_{g+3} & \cdots & P_{(n+g+1)/2} \\ 
 P_{g+3} & \adots & \adots & \vdots \\
 \vdots & \adots & \adots & P_{n-2} \\
 P_{(n+g+1)/2} & \cdots & P_{n-2} & P_{n-1} \end{array} \right|  
& n \equiv g+1 \bmod 2  \, . \end{array} \right. 
\end{equation}
For $n=g$ and $n=g+1$ we agree that $\psi_n$ is the unit element.
We have:
\[ \deg \psi_n = \left\{ \begin{array}{ll}
g(n^2-g^2)/2 & n \equiv g \bmod 2 \, , \\
g(n^2-(g+1)^2)/2 & n \equiv g+1 \bmod 2 \, . \end{array} \right. \]
Next, denote by $b(n)$ the leading coefficient of $\psi_n$ in $R$. Then $b(n)$ is an integer, and we have:
\[ p \nmid (n-g+1)\cdots(n+g-1) \Rightarrow p \nmid b(n) \]
for each prime integer $p$. Moreover, the $b(n)$ are the values at the integers $n \geq g$ of a certain numerical polynomial $b \in \qq[x]$ which can be written down explicitly.
  
The geometric meaning of the $\psi_n$ is as follows. Let $k$ be a field of characteristic $p$ where either
$p=0$ or $p \geq 2g+1$. Note that in particular $p \neq 2$. 
Let $f(x) \in k[x]$ be a monic and separable polynomial
of degree $2g+1$, and let $(X,o)$ be the elliptic or pointed hyperelliptic curve
of genus $g$ over $k$ given by the equation $y^2=f(x)$. The point $o$ is meant to be the unique point at infinity of $X$. 

Let $J = \mathrm{Pic}^0 \, X$ be the jacobian of $X$. It comes equipped with a natural symmetric theta divisor, representing the classes $[q_1+\cdots+q_{g-1}-(g-1)o]$ in $J$ where $q_1,\ldots,q_{g-1}$ are points running through $X$. Also we have a natural Abel-Jacobi embedding $\iota \colon X \to J$ given by sending $p \mapsto [p-o]$. Let $[n] \colon J \to J$ be the multiplication-by-$n$ map on $J$. For integers $n$ such that $n \geq g$ and $p \nmid  (n-g+1)\cdots(n+g-1)$ we then put: 
\[ X_n = \iota^* [n]^* \Theta \, . \] 
Then $X_n$ is an effective divisor of degree $gn^2$ on $X$. In fact, $X_n$ is the scheme of Weierstrass points of the line bundle $\mathcal{O}_X(o)^{\otimes n+g-1}$ on $X$; we refer to \cite{ne} for a further study of such schemes. Note that $X_n$ is a generalization of the scheme of $n$-torsion points on an elliptic curve. In analogy to what we did in that case in the Introduction, we subtract from each $X_n$ the part coming from the hyperelliptic ramification points. More precisely we put:
\[ X_n^* = \left\{ \begin{array}{ll}
 X_n - X_g & n \equiv g \bmod 2 \, , \\
 X_n - X_{g+1} & n \equiv g+1 \bmod 2 \, . 
\end{array} \right.
\]
We have: 
\[ X_g = \frac{g(g-1)}{2} D + g o \, , \quad X_{g+1} = \frac{g(g+1)}{2} D \, ,
\]
where $D$ denotes the reduced divisor of degree
$2g+2$ on $X$ consisting of the hyperelliptic ramification points of $X$. 
It can be shown (in fact we will see a proof below) that these $X_n^*$ 
are effective $k$-divisors on $X$ with support disjoint from the hyperelliptic ramification points. Note that:
\[ \deg X_n^* = \left\{ \begin{array}{ll}
g(n^2-g^2) & n \equiv g \bmod 2 \, , \\
g(n^2-(g+1)^2) & n \equiv g+1 \bmod 2 \, . \end{array} \right. \]
We have the following theorem.
\begin{thm} (D. Cantor \cite{ca}) Let $n \geq g$ be an integer and suppose that $p \nmid (n-g+1)\cdots(n+g-1)$. Specialize the polynomial $\psi_n$ from equation (\ref{defpsi}) to a polynomial in $k[x]$, by sending $a_1,\ldots,a_{2g+1}$ to the coefficients of $f$. Then $X_n^*$ is equal to the scheme of zeroes of $\psi_n$ on $X$.
\end{thm}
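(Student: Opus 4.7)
The plan is to prove the equality $\mathrm{div}_X(\psi_n) = X_n^*$ by comparing the zero divisor of $\psi_n(x)$ on $X$ with $X_n^*$ through the hyperelliptic sigma function $\sigma$ on $J$. Because $x$ has degree $2$ on $X$, the degrees already match: $2\deg\psi_n = \deg X_n^*$, so a set-theoretic inclusion, proven after passing to the universal flat base $R[1/2,\Delta^{-1}]$ where both sides are generically reduced, will upgrade to an equality of divisors.

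First, I would reinterpret the series $S(z) = \sum_j P_j(x)(2y)^{1-2j} z^j$ as the local expansion on $X$ of $(-1)^{g+1}$ times the branch of $y$ at the shifted point with $x$-coordinate $x-z$ that specializes to $(-1)^{g+1}y$ at $z=0$; the identity $S(z)^2 = F(x-z)$ is then tautological, and the coefficients $P_j(x)$ encode Taylor data for the Abel--Jacobi image of $X$ at $o$. The core of Cantor's argument is then a Frobenius--Stickelberger type addition theorem expressing $\sigma\bigl(\iota(p_1) + \cdots + \iota(p_m)\bigr)$ as a Vandermonde in the $x_i$ times a determinant whose entries are built from $P_j(x_i)$, times an explicit product of factors involving $\sigma(\iota(p_i))$ and $y_i$. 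Specializing $p_1 = \cdots = p_m = p$ by a confluent limit collapses this determinant into precisely the Hankel determinant defining $\psi_n$ in \eqref{defpsi}, with the parity of $m$ governed by the parity of $n-g$.

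Once such a $\sigma$-identity is in hand, the theorem follows quickly. The divisor of $p \mapsto \sigma([n]\iota(p))$ on $X$ equals $\iota^*[n]^*\Theta = X_n$ by Riemann's theorem, while the universal correction factors appearing in the addition formula contribute zeros and poles supported only at $o$ and along the hyperelliptic ramification divisor $D$, accounting precisely for the ``universal'' subdivisor $X_g$ or $X_{g+1}$ removed in the definition of $X_n^*$. The numerical condition $p \nmid (n-g+1)\cdots(n+g-1)$ enters at exactly this stage: it guarantees that $[n]$ does not introduce unexpected inseparability along $\iota(X)$ and that the leading coefficient $b(n)$ of $\psi_n$ does not vanish, so that the divisor equality is preserved under specialization. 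The main obstacle is the Frobenius--Stickelberger identity itself; over $\cc$ it is classical (Bolza, Baker), and in characteristic $\ne 2$ one either gives $\sigma$ an algebraic incarnation via Klein's construction using the fundamental bidifferential of the second kind on $X \times X$, or one spreads the complex-analytic identity to $R[1/2,\Delta^{-1}]$ and specializes.
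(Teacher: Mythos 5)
First, a point of calibration: the paper does not prove this statement at all --- it is imported wholesale from Cantor \cite{ca} and used as a black box --- so your proposal has to stand entirely on its own. As a roadmap it is closer in spirit to the later transcendental proofs of this result (Kiepert/Frobenius--Stickelberger-type determinant formulas for the hyperelliptic sigma function, in the style of \^{O}nishi and Matsutani) than to Cantor's original argument, which is purely algebraic: Cantor works with Mumford representations of the divisors $n(p-o)$, Pad\'e-type approximants to $y$ extracted from the expansion $S(z)^2=F(x-z)$, and recurrences among the $\psi_n$, with no sigma function in sight. That by itself is not an objection --- a sigma-function proof exists --- but it means the burden of proof sits exactly where you have placed a placeholder.

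The genuine gap is that your central ingredient, the addition theorem expressing $\sigma\bigl(\iota(p_1)+\cdots+\iota(p_m)\bigr)$ as a Vandermonde times a determinant built from the $P_j(x_i)$, together with its confluent degeneration to precisely the Hankel determinant in \eqref{defpsi}, \emph{is} the content of the theorem; asserting that ``over $\cc$ it is classical'' and that one can either algebraize $\sigma$ via Klein's construction or ``spread the identity to $R[1/2,\Delta^{-1}]$'' defers all of the work. In particular: (i) the confluent limit is delicate --- one must show that the degenerating determinant produces entries $P_j$ (i.e.\ the Taylor coefficients of the specific branch $S(z)$) with the stated index ranges and parities, and this is where the shapes of the two determinants in \eqref{defpsi} come from; (ii) $\sigma$ is not an algebraic function, so before specializing to characteristic $p$ you must first convert the transcendental identity into an identity of elements of $R[1/2,\Delta^{-1}][x]$ (e.g.\ the divisor equality $\mathrm{div}_X(\psi_n)=X_n^*$ on the universal curve) and then argue that the universal $X_n^*$ is flat over the locus where $p\nmid(n-g+1)\cdots(n+g-1)$, so that formation of $X_n^*$ commutes with specialization --- none of which is automatic from ``$b(n)$ does not vanish''; and (iii) your reduction of the divisor equality to a set-theoretic inclusion plus a degree count needs the inclusion in the direction $\mathrm{supp}(X_n^*)\subseteq\mathrm{supp}(\mathrm{div}\,\psi_n)$ together with reducedness of $X_n^*$ over the generic fiber, and the latter (reducedness of $\iota^*[n]^*\Theta$ minus the ramification contribution for $g\geq 2$) is itself a nontrivial transversality statement that you assume. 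The overall architecture is viable, but as written it is an outline of where a proof would live, not a proof.
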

We note that if $(X,o)$ is an elliptic curve, the polynomials $\psi_n$ with $n \geq 1$ coincide with the usual division polynomials from elliptic function theory (cf. \cite{silarit}, Exercise~3.7).

\section{Proof of Theorem \ref{main}}

We just evaluate the
determinants at the right hand side of equation $(\ref{defpsi})$ at $\alpha$, where $\alpha$ is a root of $F=x^{2g+1}+a_{1}x^{2g} + \cdots+a_{2g} x + a_{2g+1}$ in an algebraic closure $\overline{Q(R)}$ of the
fraction field $Q(R)$ of $R$, and then specialize to $k$. Let $c_m = \frac{1}{2m+1} \binom{2m+1}{m}$ for $m \geq
0$ be the $m$-th Catalan number.  
\begin{lem} Let $j\geq1$ be an integer. Then the identity:
\[ P_j(\alpha) = (-1)^g \cdot  c_{j-1} \cdot F'(\alpha)^j \]
holds in $R[\alpha]$.
\end{lem}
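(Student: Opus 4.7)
The plan is to read $P_j(\alpha)$ off from the defining identity $S(z)^2=F(x-z)$ by specializing $x=\alpha$. The obstacle is that $S(z)=\sum_{j\ge 0}P_j(x)(2y)^{1-2j}z^j$ has coefficients in $R[x,y^{-1}]$, and at a root $\alpha$ of $F$ we have $y^2=F(\alpha)=0$. I propose to clear the $y^{-1}$-denominators first via the substitution $z=w y^2$: a direct computation gives
\[
S(wy^2) \;=\; \sum_{j\ge 0} P_j(x)(2y)^{1-2j}(wy^2)^j \;=\; y \sum_{j\ge 0} P_j(x)\,2^{1-2j}\, w^j .
\]
Combining with $S(z)=(-1)^{g+1}y\sqrt{1+zE_1(z)/y^2}$, the factor $y$ cancels and one obtains the clean identity
\[
\sum_{j\ge 0} P_j(x)\,2^{1-2j}\,w^j \;=\; (-1)^{g+1}\sqrt{1+w\,E_1(wy^2)}
\]
in $R[x][[w]]$, where $E_1(wy^2)=\sum_{k\ge 0}e_k(x)F(x)^k w^k$ is now a genuine polynomial in $w$ with coefficients in $R[x]$ (using $y^2=F(x)$).

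Next I would specialize $x=\alpha$. Because $F(\alpha)=0$, every term with $k\ge 1$ in $E_1(wy^2)|_{x=\alpha}$ drops out; only the constant term survives, and $e_0(x)=E_1(0)=-F'(x)$, so $E_1(wy^2)\big|_{x=\alpha}=-F'(\alpha)$. The identity therefore becomes
\[
\sum_{j\ge 0} P_j(\alpha)\,2^{1-2j}\,w^j \;=\; (-1)^{g+1}\sqrt{1-wF'(\alpha)}
\]
in $R[\alpha][[w]]$. This is the miracle that makes the proof work: after the substitution, the whole tower of higher derivatives of $F$ disappears and only $F'(\alpha)$ remains.

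Finally, to extract the Catalan numbers, I would invoke the standard generating function $\sum_{m\ge 0}c_m t^m=(1-\sqrt{1-4t})/(2t)$. Setting $t=wF'(\alpha)/4$ rearranges to $\sqrt{1-wF'(\alpha)}=1-\sum_{j\ge 1}c_{j-1}F'(\alpha)^j\,2^{1-2j}w^j$, so comparing coefficients of $w^j$ for $j\ge 1$ yields
\[
P_j(\alpha)\,2^{1-2j} \;=\; (-1)^{g}\,c_{j-1}\,F'(\alpha)^j\,2^{1-2j},
\]
i.e.\ $P_j(\alpha)=(-1)^g c_{j-1}F'(\alpha)^j$, as claimed. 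The main non-mechanical step is recognising the substitution $z=wy^2$ as the device that simultaneously removes the $y^{-1}$ singularities and collapses $E_1$ to a constant at $x=\alpha$; once that is in place, everything else is the binomial series and the Catalan generating function.
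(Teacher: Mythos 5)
Your argument is correct, but it takes a genuinely different route from the paper's. The paper argues by induction on $j$: it shows $\frac{1}{j!}\frac{d^{j}S}{dz^{j}}=R_j(x,z)/(2S(z))^{2j-1}$ with the recursion $R_{j+1}(\alpha,0)=\frac{2(2j-1)}{j+1}R_j(\alpha,0)\,F'(\alpha)$, so the Catalan numbers enter through their recurrence $c_j=\frac{2(2j-1)}{j+1}c_{j-1}$ rather than through their generating function. Your substitution $z=wy^2$ does in one stroke what that induction does term by term: it clears the powers of $y^{-1}$, and after imposing $F(\alpha)=0$ it collapses $E_1(wy^2)$ to its constant term $-F'(\alpha)$, leaving the single binomial series $(-1)^{g+1}\sqrt{1-wF'(\alpha)}$, whose coefficients are read off from the identity $\sqrt{1-4t}=1-2\sum_{j\ge 1}c_{j-1}t^j$. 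What your approach buys is brevity and a conceptual explanation of why Catalan numbers appear at all (they are the coefficients of $\sqrt{1-4t}$); what the paper's approach buys is self-containedness, since it re-derives the Catalan recurrence instead of quoting the generating function, and it never manipulates the square-root series beyond its first derivative. Two small points you should make explicit to close the argument: cancelling the factor $y$ to obtain an identity in $R[x][[w]]$ is legitimate because $y$ is a non-zero-divisor in $R[x][y]/(y^2-F)$ (as $F$, being monic of odd degree, is not a square), and the specialization $x\mapsto\alpha$ commutes with the binomial expansion because it is induced coefficientwise by a ring homomorphism $R[x]\to R[\alpha]$ and is continuous for the $w$-adic topology. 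With those remarks added, your proof is complete.
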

\begin{proof} Recall the relations:
\[ S(z)= \sum_{j=0}^\infty P_j(x) (2y)^{1-2j}z^j \, , \quad
S(z)^2 = F(x-z) \, . \] 
We claim that:
\begin{equation} \label{byinduction}
\frac{1}{j!} \frac{ d^j S(z)}{d \, z^j} =
\frac{R_j(x,z)}{(2S(z))^{2j-1}} \end{equation}
for some $R_j(x,z) \in Q(R)[x,z]$ with $R_j(\alpha,0)= -c_{j-1}
\cdot F'(\alpha)^j $, for all $j \geq 1$. This gives the desired result since
$S(0)=(-1)^{g+1} y$ and hence $P_j(x)=(-1)^{g+1} R_j(x,0)$.

To prove the claim we argue by induction on $j$. We have:
\[ \frac{d S}{dz} = - \frac{F'(x-z)}{2S(z)} \] 
and this settles the
case $j=1$ with $R_1(x,z)=-F'(x-z)$. Now assume that (\ref{byinduction}) holds
with $R_j(x,z) \in Q(R)[x,z]$, and with $R_j(\alpha,0)=-c_{j-1} \cdot F'(\alpha)^j$
for some $j \geq 1$. Then a small calculation yields:
\[ \frac{1}{(j+1)!} \frac{d^{j+1} S}{dz^{j+1}} =
\frac{1}{j+1} \frac{d}{d z} \frac{R_j(x,z)}{(2S(z))^{2j-1}} =
\frac{R_{j+1}(x,z)}{(2S(z))^{2j+1}} \]
where:
\[ R_{j+1}(x,z) = \frac{2}{j+1} \left( 2 \left( \frac{d}{d z}
R_j(x,z) \right) F(x-z) + (2j-1)R_j(x,z) F'(x-z) \right) \, . \]
We find $R_{j+1}(x,z) \in Q(R)[x,z]$ and:
\begin{align*}
R_{j+1}(\alpha,0) & = \frac{2(2j-1)}{j+1} R_j(\alpha,0) \cdot F'(\alpha) \\
  & = -\frac{2(2j-1)}{j+1} c_{j-1} \cdot F'(\alpha)^{j+1} \\
  & = -c_j \cdot F'(\alpha)^{j+1}
\end{align*}
by the induction hypothesis. This completes the induction step.
\end{proof}
Evaluating equation $(\ref{defpsi})$ at $\alpha$ with the help of the Lemma yields the equality:
\begin{equation} \label{psi_atalpha} 
\psi_n(\alpha) = c(n) \cdot F'(\alpha)^{d(n)}   
\end{equation}
for all $n \geq g$ in $R[\alpha]$, where:
\begin{equation*}   c(n) = \left\{ \begin{array}{cl}
   \left| \begin{array}{cccc} 
 c_g &  c_{g+1} & \cdots & c_{(n+g)/2-1} \\ 
 c_{g+1} & \adots & \adots & \vdots \\
 \vdots & \adots & \adots & c_{n-3} \\
 c_{(n+g)/2-1} & \cdots & c_{n-3} & c_{n-2} \end{array} \right|  
& n \equiv g \bmod 2 \, , \\ \\
   \left| \begin{array}{cccc} 
 c_{g+1} &  c_{g+2} & \cdots & c_{(n+g-1)/2} \\ 
 c_{g+2} & \adots & \adots & \vdots \\
 \vdots & \adots & \adots & c_{n-3} \\
 c_{(n+g-1)/2} & \cdots & c_{n-3} & c_{n-2} \end{array} \right|  
& n \equiv g+1 \bmod 2 \, ,  \end{array} \right.  
\end{equation*}
at least up to a sign, and where $d(n) \in \zz_{\geq 0}$ is given by:
\[ d(n) = \left\{ \begin{array}{ll}
(n^2-g^2)/4  & n \equiv g \bmod 2 \, , \\
(n^2-(g+1)^2)/4   & n \equiv g+1 \bmod 2 \, . 
\end{array} \right.   \]
We claim that $p \nmid (n-g+1)\cdots (n+g-1) \Rightarrow
p \nmid c(n)$ holds for every prime number $p$ and every integer $n$ and
that the $c(n)$'s are the values at the integers $n \geq g$ of
a numerical polynomial $c \in \qq[x]$. The claim follows from a general result 
on Hankel determinants of Catalan numbers, due to M.~Desainte-Catherine and G.~Viennot (\cite{dcv}, Section~6). This result states that for arbitrary integers $l,m \geq 1$ we have the identity:
\[ 
\left| \begin{array}{cccc} 
 c_l &  c_{l+1} & \cdots & c_{l+m-1} \\ 
 c_{l+1} & \adots & \adots & \vdots \\
 \vdots & \adots & \adots & c_{l+2m-3} \\
 c_{l+m-1} & \cdots & c_{l+2m-3} & c_{l+2m-2} \end{array} \right|
= \prod_{1 \leq i \leq j \leq l-1} \frac{i+j +2m}{i+j} \, . 
\]
In particular $c(n)$ is non-vanishing in $k$ if the characteristic $p$ of $k$ satisfies $p \nmid (n-g+1)\cdots(n+g-1)$. And $c(n)$ has only polynomial growth in $n$.

Let us now place ourselves in the situation of Theorem \ref{main}. In particular we work over a field $k$ of characteristic $p$ with $p=0$ or $p \geq 2g+1$, and now $\alpha$ is a given root of $f \in k[x]$ in $k$. Let $n\geq g$ be an integer such that 
$p \nmid (n-g+1)\cdots(n+g-1)$.
From equation (\ref{psi_atalpha}) we obtain by specializing:
\begin{equation} \label{conclusion}  
\psi_n(\alpha) = c(n) \cdot f'(\alpha)^{d(n)} 
\end{equation}
in $k$. Since $f'(\alpha)$ and $c(n)$ are both non-zero in $k$ we deduce that
$\psi_n(\alpha)$ is non-zero in $k$ as well. In particular we find that $X_n^*$ has support disjoint from
the hyperelliptic ramification points, which proves a claim that we made earlier.
Theorem \ref{main} follows from equation (\ref{conclusion}) upon taking absolute values and logarithms, and letting $n$ tend to infinity, under the condition that $p \nmid (n-g+1)\cdots(n+g-1)$.  

\subsection*{Acknowledgments.}
The research done for this paper was supported by a VENI grant 
from the Netherlands Organisation for Scientific Research (NWO). Part of the research was done at the Max Planck Institute in Bonn, whose hospitality is greatly acknowledged.

\end{document}